\newtheorem{theorem}{Theorem}[section]
\newtheorem{lemma}[theorem]{Lemma}
\theoremstyle{definition}
\newtheorem{definition}[theorem]{Definition}
\theoremstyle{remark}
\numberwithin{equation}{section}
\begin{document}

\title{ Inductive Limits for  Systems of Toeplitz Algebras}
%\title{ Inductive sequences and $C^*$-algebras generated by isometries}
%\title{ On inductive sequences and semigroup $C^*$-algebras}
%\title[APPROXIMATION OF TENSORS AND  THE TOPOLOGICAL GROUP  STRUCTURE OF   MATRICES]{ A LOW-RANK APPROXIMATION OF TENSORS AND   \\ THE  TOPOLOGICAL  GROUP  STRUCTURE OF INVERTIBLE  MATRICES}
%\rightheadtext{limit automorphisms of Toeplitz algebras}
%    Information for first author
\author{R.N.~Gumerov }
%    Address of record for the research reported here
\address{Chair of Mathematical Analysis, N.I. Lobachevskii Institute of Mathematics and Mechanics,
 Kazan (Volga region) Federal University, Kremlevskaya~35, Kazan,420008, Tatarstan,
 Russian Federation}
%    Current address
%\curraddr{Department of Mathematics and Statistics,
%Case Western Reserve University, Cleveland, Ohio 43403}
\email{Renat.Gumerov@kpfu.ru; rn.gumeroff@gmail.com}
%    \thanks will become a 1st page footnote.
\thanks{The research was funded by the subsidy allocated to Kazan Federal University
for the state assignment in the sphere of scientific activities, project ¹ 1.13556.2019/13.1.}

%    Information for second author

%    General info
\subjclass[2010]{46L05, 47L40, 81T05 }

%\date{January 1, 1994 and, in revised form, June 22, 1994.}

%\dedicatory{This paper is dedicated to our authors.}

\keywords{$C^*$-algebra, directed set, inductive limit, inductive
system, partially ordered set,  reduced semigroup $C^*$-algebra,
semigroup, Toeplitz algebra}

\begin{abstract}
 This article
deals with  inductive systems of  Toeplitz algebras over arbitrary
directed sets.  For such a system the family of its connecting
injective $*$-homomorphisms is defined by  a set of natural numbers
satisfying a factorization property. The motivation for the study of
those inductive systems comes from our previous work on the
inductive sequences of Toeplitz algebras defined by sequences of
numbers and the limit automorphisms for the inductive limits of such
sequences. We show that there exists an isomorphism in the category
of unital  $C^*$-algebras and unital $*$-homomorphisms between the
inductive limit of an inductive system of Toeplitz algebras over a
directed set defined by a set of natural numbers and a reduced
semigroup  $C^*$-algebra for a semigroup in the group of all
rational numbers. The inductive systems of Toeplitz algebras over
arbitrary partially ordered  sets defined by  sets of natural
numbers are also studied.
\end{abstract}

\maketitle

\section{Introduction}
The main part of motivation for the present article comes from our
work on inductive sequences of Toeplitz algebras and limit
automorphisms of  $C^*$-algebras generated by isometric
representations for semigroups of rational numbers  (see
\cite{gumerovSMJ2018}). We transfer some results from
\cite{gumerovSMJ2018} concerning inductive sequences of Toeplitz
algebras defined by sequences of numbers to the case of inductive
systems over arbitrary directed sets defined by sets of natural
numbers satisfying a factorization property. In turn, the results in
\cite{gumerovSMJ2018}  are closely related to those in
\cite{ggumerovLJM2002, gumerovPAMS2005, gumerovLJM2005,
gumerovSMJ2013, gumerovRusM2014, UchZBook22018}  which are devoted
to mappings of compact topological groups.

 A part of motivation for studying inductive systems of  $C^*$-algebras
 comes from algebraic quantum field theory  \cite{HaagKastler1964, Haag1996, Araki2009, Horuzhy}.
 The general framework of algebraic quantum field theory is given by a covariant functor. Usually that functor acts from a category associated to a partially ordered set into a category describing the
algebraic structure of observables. The standard assumption in
quantum physics is that the second category consists of unital
$C^*$-algebras and unital   $*$-homomorphisms between  $C^*$-algebras.
Thus one has an inductive system of  $C^*$-algebras.

 A simple example of an inductive system   $\mathcal{F}=(K,\{\mathfrak{A}_a\},\{\sigma_{ba}\})$
  of   $C^*$-algebras over a directed set   $(\,K,\, \leq \,)$  is that in which  $\{\mathfrak{A}_a\mid a\in K\}$
   is \emph{a net of  $C^*$-subalgebras} of a given   $C^*$-algebra  $\mathfrak{A}$.
   By this, one means that each  $\mathfrak{A}_a$  is a  $C^*$-subalgebra containing the unit  $\mathbb{I}_\mathfrak{A}$
    of the algebra  $\mathfrak{A}$,  $\mathfrak{A}_a\subset\mathfrak{A}_b$  and
     $\sigma_{ba}:\mathfrak{A}_a\longrightarrow\mathfrak{A}_b$  is the inclusion mapping whenever  $a,b\in K$
      and  $a\leq b$. Given such a net  $\mathcal{F}$, the norm closure of the union of all  $\mathfrak{A}_a$
       is itself a  $C^*$-subalgebra of  $\mathfrak{A}$  which is a simple example of an inductive
       limit in the category of  $C^*$-algebras.

  The basic tool of the algebraic approach to quantum fields over a spacetime is  a net of
   $C^*$-algebras over a  set defined as a
suitable set of regions of the spacetime ordered under inclusion
\cite{HaagKastler1964, Haag1996, Araki2009}. In \cite{Ruzzi2005,
RuzziVasselli2012, Vasselli2015}  nets consisting of $C^*$-algebras
of quantum  observables for the case of curved spacetimes are
studied. The paper \cite{GLS2016} deals with a net that is
constructed by means of the semigroup  $C^*$-algebra generated by
the path semigroup for  a partially ordered.  In \cite{GLS2018} the
authors consider a net of  $C^*$-algebras associated to a net over a
partially ordered set consisting of Hilbert spaces.

In this article we study inductive limits for systems of Toeplitz
algebras over arbitrary directed sets. Here, by the Toeplitz algebra
we mean the reduced semigroup $C^*$-algebra for the additive
semigroup of non-negative integers. We recall that the reduced
semigroup  $C^*$-algebra  can be constructed for an arbitrary left
cancellative semigroup. This algebra is a very natural object
because it is generated by the left regular representation of the
left cancellative semigroup.The study of such semigroup
$C^*$-algebras goes back to L.~A.~Coburn \cite{coburn67, coburn69},
R.~G.~Douglas \cite{douglas72}, G.~J.~Murphy \cite{murphy87,
murphy91}  and is continued at the present time (see, for example,
\cite{LipSibMJ, Liarxiv} and references there in).

To define a family of  connecting injective  $*$-homomorphisms for
an inductive system of Toeplitz algebras over a directed set we make
use of a set of  natural numbers satisfying factorization equalities
(see Section~\ref{sectionILSTA},
Definition~\ref{D:indsystemToeplitzalg})   and Coburn's Theorem
\cite[Theorem~3.5.18]{murphy}.

The main result, Theorem~\ref{indlim},  states  that the inductive
limit  for  an inductive system of Toeplitz algebras over a directed
set defined by a set of natural numbers is isomorphic in the
category of unital  $C^*$-algebras and their unital
$*$-homomorphisms to a reduced semigroup  $C^*$-algebra for a
semigroup in the group of all rational numbers.

The article is organized as follows. It consists of  Introduction,
Preliminaries  and three more sections containing the results.
Section~\ref{sectionILSTA} deals with the  auxiliary statements that
are used  for proving the main result. In Section~\ref{main} is
devoted to the proof of Theorem~\ref{indlim}. Section~\ref{last}
contains the theorem on inductive systems of Toeplitz algebras over
arbitrary partially ordered sets defined by  sets of natural numbers
satisfying factorization equalities.

The results in this article were announced without proofs in
\cite{GLG2018} and are closely related to those in \cite{GLG2019}.

\section{Preliminaries}\label{preliminaries}

As usual,  $\mathbb{N}$, $\mathbb{Z}$, $\mathbb{Q}$  and
$\mathbb{C}$  denote the set of all natural numbers, the additive
group of all integers, the additive group of all rational numbers
and the field of all complex numbers respectively. For a sequence of
numbers \ $M=(m_1, m_2,  \ldots)$, where  $m_s\in \mathbb{N}$,
$s\in\mathbb{N}$,   we shall consider the subgroup $\mathbb{Q}_M$ of
the group  $\mathbb{Q}$ defined as follows:
$$
\mathbb{Q}_M:=\left\{\frac{m}{m_1m_2\ldots m_s}\mid m\in \mathbb{Z}, s\in\mathbb{N} \right\}.
$$
It is known  (see, for example, \cite[Proposition~1]{murphy91},
\cite[Lemma~1]{bogatyifrolkina})  that the group   $\mathbb{Q}_M$ is
the inductive (direct) limit in the category of groups and their
homomorphisms for the following inductive (direct) sequence
\begin{equation}\label{directseqZ}
\CD
\mathbb{Z}  @>\tau_1>>\mathbb{Z}  @>\tau_2>> \mathbb{Z}  @>\tau_3>> \ldots,
\endCD
\end{equation}
where the connecting homomorphisms  $\tau_s$  are given by
$\tau_s(m)=m_sm,$ \ $ m\in \mathbb{Z},$   $s\in\mathbb{N}$. By a
directed set we mean an upward directed partially ordered set.
 We recall the definition of  reduced semigroup
$C^*$-algebras for semigroups in  $\mathbb{Q}$. To do this we assume
that  $\Gamma$  is an arbitrary subgroup in the group  $\mathbb{Q}$.
The positive cone in the ordered group $\Gamma$
 is denoted by the symbol
$$
\Gamma^+:=\Gamma\cap
[0,+\infty)\,.
$$
As usual, the symbol  $l^2(\Gamma^+)$  stands for the Hilbert space
of all square summable complex-valued functions on the additive
semigroup $\Gamma^+$:
$$
   l^2(\Gamma^+):=\{f:\Gamma^+\rightarrow \mathbb{C}:\sum_{\gamma\in \Gamma^+} |f(\gamma)|^2 < +\infty \}.
$$
Recall that   the inner product in the space  $l^2(\Gamma^+)$  is
given  by the formula
   $
   <f,g>:=\sum_{\gamma\in \Gamma^+} f(\gamma)\overline{g(\gamma)}.
   $
The canonical orthonormal basis in the Hilbert space $l^2(\Gamma^+)$
is denoted by  $\{\, e_g \mid g\in \Gamma^+ \, \}$. That is, for
arbitrary elements  $g,h\in \Gamma^+$,
 we set  $e_g(h)=\delta_{g,h}$, where
 \begin{equation*}\label{Def:ea}
\delta_{g,h}:=
\begin{cases}
1, &\text{if $g=h$\,;}\\
0, &\text{if $g\neq h$\,.}
\end{cases}
\end{equation*}

 Let us consider the $C^*$-algebra  of all bounded linear operators $B(l^2(\Gamma^+))$
 in the Hilbert space $l^2(\Gamma^+)$.
For every element $g\in \Gamma^+$, we define the isometry $V_g\in
B(l^2(\Gamma^+))$ by
$
V_ge_h:=e_{g+h},
$
 where  $h$  is an element of the semigroup  $ \Gamma^+.$

  We denote by  $C^*_r(\Gamma^+)$  the  $C^*$-subalgebra in the  $C^*$-algebra  $B(l^2(\Gamma^+))$  generated
 by the set of isometries  $\{\, V_g \, \mid \, g\in \Gamma^+ \, \}$. The algebra   $C^*_r(\Gamma^+)$  is called
  \textit{the reduced semigroup \ $C^*$-algebra of the semigroup $\Gamma^+$}, or
 \textit{the~Toeplitz algebra generated by  $\Gamma^+$}.
As was mentioned in Introduction, in the similar way a semigroup
$C^*$-algebra can be defined for an arbitrary left cancellative
semigroup (see, for example,  \cite[Section~2]{Liarxiv}). In the
case when  $\Gamma$ is the group   $\mathbb{Z}$, we also denote the
semigroup  $C^*$-algebra  $C^*_r(\mathbb{Z}^+)$  by $\mathcal{T}$
and use the symbols  $T$  and \ $T^n$  instead of $V_1$  and  $V_n$,
respectively, where  $n\in \mathbb{Z^+}$.

The following statement is an immediate consequence of Coburn's
theorem \cite[Theorem 3.5.18]{murphy}.

\begin{lemma}\label{CoburnThCor}
For every number \ $n\in \mathbb{N}$, there exists a unique unital
$*$-homo\-morphism of \ $C^*$-algebras
$\varphi:\mathcal{T}\longrightarrow \mathcal{T}$  such that  $
\varphi(T)=T^n$. Moreover, $\varphi$ is isometric.
\end{lemma}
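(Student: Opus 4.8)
The plan is to derive the statement directly from Coburn's theorem \cite[Theorem~3.5.18]{murphy}, so that the entire argument reduces to one structural observation about the operator $T^n$ together with a formal composition. Recall that Coburn's theorem guarantees that whenever $u$ is a \emph{non-unitary} isometry in a unital $C^*$-algebra, the unique unital $*$-homomorphism from $\mathcal{T}$ onto $C^*(u)$ determined by $T\mapsto u$ is in fact a $*$-isomorphism. Hence everything comes down to exhibiting $T^n$ as a non-unitary isometry and then transporting the resulting isomorphism back into $\mathcal{T}$.

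First I would verify that $T^n=V_n$ is an isometry which is not unitary. Since $T$ is an isometry and products of isometries are isometries, one has $(T^n)^{*}T^n=\mathbb{I}_{\mathcal{T}}$. For the non-unitarity, I would compute the range projection $T^n(T^n)^{*}$: because $T^ne_h=e_{n+h}$ for $h\in\mathbb{Z}^{+}$, the range of $T^n$ is the closed linear span of $\{\,e_k\mid k\geq n\,\}$, so $T^n(T^n)^{*}$ is the orthogonal projection onto this subspace. As $n\geq 1$, this projection annihilates $e_0$ and therefore differs from $\mathbb{I}_{\mathcal{T}}$, which shows that $T^n$ is non-unitary.

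Having established that $T^n$ is a non-unitary isometry, I would apply Coburn's theorem to obtain the unique unital $*$-isomorphism $\psi:\mathcal{T}\longrightarrow C^{*}(T^n)$ with $\psi(T)=T^n$, and then compose it with the inclusion $\iota:C^{*}(T^n)\hookrightarrow\mathcal{T}$ to define $\varphi:=\iota\circ\psi:\mathcal{T}\longrightarrow\mathcal{T}$. By construction $\varphi$ is a unital $*$-homomorphism with $\varphi(T)=T^n$; since $\psi$ is a $*$-isomorphism and $\iota$ is an isometric inclusion, $\varphi$ is injective, and every injective $*$-homomorphism between $C^{*}$-algebras is isometric. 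This yields both the existence of $\varphi$ and the final assertion that it is isometric.

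For uniqueness I would argue that any unital $*$-homomorphism with $\varphi(T)=T^n$ is completely prescribed on the generators: it necessarily sends $T^{*}$ to $(T^n)^{*}$ and the unit to the unit, hence it is determined on every $*$-polynomial in $T$ and $T^{*}$, and these are dense in $\mathcal{T}$. Continuity of $\varphi$ then forces uniqueness. I do not anticipate any serious difficulty; the one point that genuinely requires attention is the non-unitarity of $T^n$, as this is precisely the hypothesis that licenses the isomorphism form of Coburn's theorem used above.
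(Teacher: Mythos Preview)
Your proposal is correct and matches the paper's approach exactly: the paper does not give a proof but simply declares the lemma an immediate consequence of Coburn's theorem \cite[Theorem~3.5.18]{murphy}, and your argument supplies precisely the expected details (non-unitarity of $T^n$, application of Coburn, uniqueness via density of $*$-polynomials). If anything, your route through $C^{*}(T^n)$ followed by inclusion is a mild detour---one can apply Coburn's theorem directly with target algebra $\mathcal{T}$---but this is cosmetic.
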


   In the sequel, we abbreviate those self-homomorphisms of the Toeplitz algebra as follows:
\[
\varphi:\mathcal{T}\longrightarrow \mathcal{T}:T\longmapsto T^n.
\]
We note that a straightforward proof of Lemma~\ref{CoburnThCor} is
also contained in \cite[Proposition~3]{gumerovIOP}. For necessary
results in the theory of  $C^*$-algebras we refer the reader, for
example, to books  \cite{murphy}  and  \cite[Ch.\,4,
\S\,7]{helemskii}.

Further, we recall the definition and  some facts concerning the
inductive limits for inductive systems of  $C^*$-algebras (see, for
example,  \cite[Section 11.4]{KadisonRingrose}). Necessary facts
from the theory of categories and functors are contained, for
example, in  \cite[ Ch.\,0, \S\,2]{helemskii}  and
\cite{BucurDeleanu1968}.

In what follows, up to Section~\ref{last}, we shall consider a
directed set  $(\,K, \, \leq \,)$. Note that in Section~\ref{last}
we will deal with  an arbitrary partially ordered set   $(\,K, \,
\leq \,)$    that is not necessarily directed.

 The
category associated to the set  $(\,K, \, \leq \,)$   is denoted by
the same letter  $K$. We recall that the objects of this category
are the elements of the set  $K$, and for any pair  $a,b \in K$ the
set of morphisms  $Mor_K(a,b)$  from  $a$  to   $b$  is defined as
follows:
\begin{equation*}\label{Mor(a,b)}
Mor_K(a,b)=
\begin{cases}
\{(a,b)\}, &\text{if $a\leq b$\,;}\\
\emptyset, &\text{otherwise\,.}
\end{cases}
\end{equation*}

Let us take a covariant functor  $\mathcal{F}$  from the category
$K$  into the category of unital  $C^*$-algebras and their unital
$*$-homomorphisms. Such a functor is called \emph{an inductive
system} in the category of  $C^*$-algebras over the set  $(K, \,
\leq~)$. It may be given by a collection
$(K,\{\mathfrak{A}_a\},\{\sigma_{ba}\})$  satisfying the properties
from the definition of a functor.  We shall write
$\mathcal{F}=(K,\{\mathfrak{A}_a\},\{\sigma_{ba}\})$.
 Here, $\{\mathfrak{A}_a\mid a\in K\}$ \ is a family of unital $C^*$-algebras.
 We also suppose that all morphisms $\sigma_{ba}:\mathfrak{A}_a\longrightarrow \mathfrak{A}_b$,
 where  $a\leq b$, are embeddings of
$C^*$-algebras, i.\,e., unital injective $*$-homomorphisms. Recall
that the diagram
\[
\xymatrix{
  \mathfrak{A}_a \ar[rr]^{\sigma_{ca}} \ar[dr]_{\sigma_{ba}}
                &  &    \mathfrak{A}_c     \\
                & \mathfrak{A}_b \ar[ur]_{\sigma_{cb}}                }
\]
is commutative for  all elements  $a,b,c \in K$  satisfying the
conditions  $a\leq b$  and  $ b\leq c$, that is, the following
equation holds:
\begin{equation}\label{sigmacasigmacbsigmaba}
\sigma_{ca}=\sigma_{cb}\circ\sigma_{ba}.
\end{equation}
Furthermore, for each element  $a\in K$  the morphism
$\sigma_{aa}:\mathfrak{A}_a\longrightarrow \mathfrak{A}_a$ is the
identity mapping. In the case of a countable set  $K$ the system
$\mathcal{F}$  is called \emph{an inductive sequence} of
$C^*$-algebras.

\emph{The inductive limit} of the system
$\mathcal{F}=(K,\{\mathfrak{A}_a\},\{\sigma_{ba}\})$  over a
directed set  $K$  is a pair  $(\mathfrak{A}, \{\sigma^K_a\})$,
where
 $\mathfrak{A}$  is a  $C^*$-algebra and  $\{\sigma^{K}_a:\mathfrak{A}_a\rightarrow \mathfrak{A} \mid a\in
K\}$  is a family of unital injective  $*$-homomorphisms such that
the following two properties are fulfilled  \cite[Proposition
11.4.1]{KadisonRingrose}:

1) For every pair of elements  $a,b\in K$  satisfying the condition
 $a\leq b$  the diagram
\[
\xymatrix{
  \mathfrak{A}_a \ar[rr]^{\sigma_{ba}} \ar[dr]_{\sigma^{K}_a}
                &  &    \mathfrak{A}_b \ar[dl]^{\sigma^{K}_b}    \\
                & \mathfrak{A}                 }
\]
is commutative, that is, the equality for mappings
\begin{equation}\label{sigma}
\sigma^{K}_a=\sigma^{K}_b\circ\sigma_{ba}
\end{equation}
holds. Moreover, we have the~following equality:
\begin{equation}\label{cup}
\mathfrak{A}=\overline{\mathop\bigcup\limits_{a\in
K}\sigma^{K}_a(\mathfrak{A}_a)},
\end{equation}
where the bar means the closure of the set with respect to the norm
topology in the  $C^*$-algebra  $\mathfrak{A}$.

2)  If  $\mathfrak{B}$  is a  $C^*$-algebra,
$\psi_a:\mathfrak{A}_a\longrightarrow\mathfrak{B}$ is an injective
$*$-homomorphism for each  $a\in K$, and conditions analogous to
those in  (\ref{sigma})  and  (\ref{cup})  are satisfied, then there
exists a~unique  $*$-isomorphism $\theta$  from $\mathfrak{A}$ onto
 $\mathfrak{B}$  such that the diagram
\begin{equation*}
\xymatrix{
                & \mathfrak{A}_a\ar[dr]^{\psi_a} \ar[dl]_{\sigma^{K}_a}            \\
 \mathfrak{A}  \ar[rr]^{\theta} & &     \mathfrak{B}        }
\end{equation*}
is commutative for every $a\in K$,  that is, the equality
$\psi_a=\theta\circ\sigma^{K}_a $ holds.

The  $C^*$-algebra  $\mathfrak{A}$  itself is often called the
inductive limit. It is denoted as follows: $ \varinjlim\mathcal{F}:=
\mathfrak{A}. $ As is well known, the inductive limit can always be
constructed for an inductive system in the category of
$C^*$-algebras and their $*$-homomorphisms. For the details we refer
the reader to Proposition 11.4.1 in \cite{KadisonRingrose}.

\section{Auxiliary results}\label{sectionILSTA}

 The inductive sequences of Toeplitz algebras defined by arbitrary sequences
  of prime numbers are the objects for studying in  \cite{gumerovSMJ2018}. Now,
   by analogy with the definition of such a sequence (see \cite[Definition~1]{gumerovSMJ2018}),
    we are going to give the definition of the inductive system of Toeplitz algebras over a partially
    ordered set defined by a set of natural numbers possessing an additional property.

Let  $(\,K, \, \leq \,)$  be a directed set. In what follows, we
consider a set of natural numbers
 \begin{equation}\label{setN}
 N=\{n_{ba}\in \mathbb{N} \mid a, b\in K, a\leq b \}
 \end{equation}
 such that the factorization equalities
\begin{equation}\label{factor}
n_{ca}=n_{cb}\cdot n_{ba}
\end{equation}
hold for all elements  $a, b, c\in K$  satisfying the conditions
$a\leq b$ and $b\leq c$. It follows immediately from (\ref{factor})
 that the equality  $n_{aa}=1$  holds for every $a\in K$.

Further, using Lemma~\ref{CoburnThCor}, for every number  $n_{ba}\in
N$  we define the isometric  $*$-homomorphism by the formula
\begin{equation}\label{sigmaba}
\sigma_{ba}:\mathcal{T}\longrightarrow \mathcal{T}:T\longmapsto
T^{n_{ba}}.
\end{equation}
It is clear that the equalities  (\ref{sigmacasigmacbsigmaba}) are
valid for all elements  $a, b,c\in K$   whenever  the conditions
$a\leq b$  and  $b\leq c$  hold, and for each  $a\in K$  the
$*$-homomorphism $\sigma_{aa}:\mathfrak{A}_a\longrightarrow
\mathfrak{A}_a$  is the identity mapping. Thus we can give the
definition of an inductive system of  Toeplitz algebras (compare
with \cite[Definition~1]{gumerovSMJ2018}) over a directed set
defined by a set of natural numbers satisfying factorization
equalities.

\begin{definition}\label{D:indsystemToeplitzalg}
Let  $(\,K, \, \leq \,)$  be a directed  set and $N$  be a  set of
natural numbers  (\ref{setN})  satisfying   (\ref{factor}). An
inductive system of  $C^*$-algebras
\begin{equation}\label{mathcalFindsystemoverK}
\mathcal{F}=(K,\{\mathcal{T}_a\},\{\sigma_{ba}\}),
\end{equation}
where  $\mathcal{T}_a=\mathcal{T}$  for all  $ a\in K$  and the
connecting  $*$-homomorphisms  $\sigma_{ba}$  are given by
(\ref{sigmaba}), is called the inductive system of Toeplitz algebras
over    $K$   defined by  $N$.
\end{definition}

To obtain the main result of the article  we  shall prove in this
section several auxiliary assertions. For  formulations of these
assertions we introduce  additional notation.

Firstly, for a directed  set   $K$  and its arbitrary element  $a\in
K$   the cofinal subset   $K^a$  of   $K$  is defined as follows:
$
K^a:=\{b\in K \mid a\leq b\}.
$

Secondly, for a subset   $S$  in the set  $K^a$  we shall deal with
 the set of natural numbers
$$
N(S):=\left\{n_{ba}\in N \mid b\in  S \right\}.
$$
 Throughout this section     $(K,\{\mathcal{T}_b\},\{\sigma_{cb}\})$  is an inductive system  of Toeplitz algebras
over  $K$  defined by set  (\ref{setN})  satisfying  (\ref{factor}).
Moreover, for an element  $a\in K$  we consider the inductive system
\begin{equation}\label{systemoverKa}
\left(K^a,\{\mathcal{T}_b\},\{\sigma_{cb}\}\right)
\end{equation}
of Toeplitz algebras over   $K^a$  defined by the set
\begin{equation}\label{NsetforKa}
\left \{n_{cb}\in N \mid  b,c\in K^a\right\}.
\end{equation}
Using  property~2)  of the inductive limit for the system
  (\ref{systemoverKa}) (see Preliminaries),  it is straightforward
to prove the following statement.

\begin{lemma}\label{l1}
For every element  $a\in K$   there exists an isomorphism  between
the inductive limits
$$
\varinjlim(K^a,\{\mathcal{T}_b\},\{\sigma_{cb}\}) \, \simeq \, \varinjlim
(K,\{\mathcal{T}_b\},\{\sigma_{cb}\})
$$
in the category of unital   $C^*$-algebras and unital
$*$-homomorphisms.
\end{lemma}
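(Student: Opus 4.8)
The plan is to invoke the universal property of the inductive limit, namely property~2) of the Preliminaries, for the system over $K^a$, taking as the competing pair the limit $\mathfrak{A}:=\varinjlim(K,\{\mathcal{T}_b\},\{\sigma_{cb}\})$ together with the restriction to $K^a$ of its canonical homomorphisms. First I would record that $K^a$ is itself a directed set: for $b,c\in K^a$ the directedness of $K$ yields $d\in K$ with $b\leq d$ and $c\leq d$, and then $a\leq b\leq d$ forces $d\in K^a$, so $d$ is an upper bound of $b,c$ inside $K^a$. Hence the inductive limit over $K^a$ exists; I denote it by $(\mathfrak{A}',\{\sigma^{K^a}_b\})$ and write $(\mathfrak{A},\{\sigma^{K}_b\})$ for the limit over $K$.

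Next I would set $\mathfrak{B}:=\mathfrak{A}$ and, for each $b\in K^a$, take the unital injective $*$-homomorphism $\psi_b:=\sigma^{K}_b:\mathcal{T}_b\longrightarrow\mathfrak{A}$, and verify that the pair $(\mathfrak{A},\{\psi_b\}_{b\in K^a})$ fulfils the hypotheses of property~2) relative to the $K^a$-system. The compatibility condition analogous to \eqref{sigma} is immediate, since for $b\leq c$ in $K^a$ one has $\psi_b=\sigma^{K}_b=\sigma^{K}_c\circ\sigma_{cb}=\psi_c\circ\sigma_{cb}$, the middle equality being exactly \eqref{sigma} already available for the larger system over $K$.

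The one point that genuinely uses the passage from $K$ to $K^a$, and which I regard as the main (though still routine) step, is the density condition analogous to \eqref{cup}, namely
\[
\mathfrak{A}=\overline{\bigcup_{b\in K^a}\sigma^{K}_b(\mathcal{T}_b)}.
\]
Here the cofinality of $K^a$ in $K$ does the work: given any $d\in K$, directedness of $K$ produces $b\in K$ with $d\leq b$ and $a\leq b$, so that $b\in K^a$, and \eqref{sigma} gives $\sigma^{K}_d(\mathcal{T}_d)=\sigma^{K}_b(\sigma_{bd}(\mathcal{T}_d))\subseteq\sigma^{K}_b(\mathcal{T}_b)$. Consequently $\bigcup_{d\in K}\sigma^{K}_d(\mathcal{T}_d)\subseteq\bigcup_{b\in K^a}\sigma^{K}_b(\mathcal{T}_b)$, and passing to closures, together with \eqref{cup} for the $K$-system, yields the displayed equality.

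With both conditions verified, property~2) applied to the inductive limit of the $K^a$-system delivers a unique $*$-isomorphism $\theta:\mathfrak{A}'\longrightarrow\mathfrak{A}$ satisfying $\sigma^{K}_b=\theta\circ\sigma^{K^a}_b$ for every $b\in K^a$. This $\theta$ is the asserted isomorphism; as all homomorphisms involved are unital, it is an isomorphism in the category of unital $C^*$-algebras and unital $*$-homomorphisms, which completes the argument.
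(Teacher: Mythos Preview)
Your proposal is correct and follows precisely the route the paper indicates: the paper merely states that the lemma is ``straightforward to prove'' using property~2) of the inductive limit for the system over $K^a$, and you have supplied exactly that argument, with the cofinality of $K^a$ in $K$ furnishing the density condition and the compatibility condition inherited from the larger system.
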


\begin{lemma}\label{l2}
The following two assertions are valid\/${:}$

{\rm 1)} \, if  for elements   $b,c\in K^a$   the equality for
numbers
\begin{equation}\label{eq:nbanca}
n_{ba}=n_{ca}
\end{equation}
 holds   then
we have the equality for the images of Toeplitz algebras
\begin{equation}\label{eq:sigmabK}
\sigma_b^{K^a}(\mathcal{T}_b)=\sigma_c^{K^a}(\mathcal{T}_c);
\end{equation}

{\rm 2)} \, if for some  $k\in\mathbb{N}$  and   $b,c\in K^a$  the
equality for numbers
\begin{equation}\label{eq:ncaknba}
 n_{ca}=k\cdot n_{ba}
 \end{equation}
   holds    then  we  have  the   inclusion for the images of Toeplitz algebras
 \begin{equation}\label{inclusionsigmabK}
\sigma_b^{K^a}(\mathcal{T}_b)\subset\sigma_c^{K^a}(\mathcal{T}_c).
 \end{equation}
\end{lemma}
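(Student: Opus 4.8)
The plan is to reduce both assertions to a direct comparison of the single generators of the two images inside the inductive limit $\varinjlim(K^a,\{\mathcal{T}_b\},\{\sigma_{cb}\})$, which I denote $(\mathfrak{A},\{\sigma_b^{K^a}\})$. For each $b\in K^a$ put $t_b:=\sigma_b^{K^a}(T)$; this is an isometry in $\mathfrak{A}$, and since $\mathcal{T}=C^*(T)$ while $\sigma_b^{K^a}$ is an injective $*$-homomorphism with closed range, its image $\sigma_b^{K^a}(\mathcal{T}_b)$ is exactly the $C^*$-subalgebra $C^*(t_b)$ generated by $t_b$. Thus conclusions (\ref{eq:sigmabK}) and (\ref{inclusionsigmabK}) amount to $C^*(t_b)=C^*(t_c)$ and $C^*(t_b)\subset C^*(t_c)$, respectively, and it suffices to compare $t_b$ with $t_c$.

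The crucial step is to recognize that one should \emph{not} relate $t_b$ and $t_c$ through the bottom element $a$: the relations $t_a=t_b^{n_{ba}}=t_c^{n_{ca}}$ do hold by (\ref{sigma}) and (\ref{sigmaba}), but equal powers of isometries need not coincide, so this alone is useless. Instead I would use that $K^a$ is again directed (if $b,c\in K^a$ and $d\in K$ majorizes both in $K$, then $a\le b\le d$ forces $d\in K^a$) to choose $d\in K^a$ with $b\le d$ and $c\le d$. Applying the compatibility relation (\ref{sigma}) for the limit over $K^a$ together with the defining formula (\ref{sigmaba}) gives $t_b=\sigma_d^{K^a}(T^{n_{db}})=t_d^{\,n_{db}}$ and likewise $t_c=t_d^{\,n_{dc}}$, so both generators are expressed as powers of the single isometry $t_d$.

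Now the factorization equalities (\ref{factor}), applied along $a\le b\le d$ and $a\le c\le d$, yield $n_{db}\cdot n_{ba}=n_{da}=n_{dc}\cdot n_{ca}$. For assertion 1), substituting the hypothesis $n_{ba}=n_{ca}$ and cancelling this nonzero natural number gives $n_{db}=n_{dc}$, whence $t_b=t_d^{\,n_{db}}=t_d^{\,n_{dc}}=t_c$ and therefore $C^*(t_b)=C^*(t_c)$, which is (\ref{eq:sigmabK}). For assertion 2), substituting $n_{ca}=k\cdot n_{ba}$ and cancelling $n_{ba}$ gives $n_{db}=k\cdot n_{dc}$, so $t_b=t_d^{\,k\,n_{dc}}=(t_d^{\,n_{dc}})^{k}=t_c^{\,k}$. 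Since $t_c^{\,k}$ lies in the $C^*$-subalgebra generated by $t_c$, we obtain $C^*(t_b)\subset C^*(t_c)$, which is (\ref{inclusionsigmabK}).

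I expect no serious obstacle here; the only point requiring care is the observation just made, namely that the comparison must be carried out at a common upper bound $d$ rather than at the common lower bound $a$, after which everything is a short computation with exponents. One should also make sure, when identifying $\sigma_b^{K^a}(\mathcal{T}_b)$ with $C^*(t_b)$, to invoke that a $*$-homomorphism of $C^*$-algebras has closed range, so that the image of $C^*(T)$ is again a $C^*$-algebra, namely the one generated by the image of $T$.
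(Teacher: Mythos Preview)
Your proof is correct and follows essentially the same route as the paper: choose a common upper bound $d\in K^a$ of $b$ and $c$, use the factorization equalities $n_{db}\,n_{ba}=n_{da}=n_{dc}\,n_{ca}$ to deduce $n_{db}=n_{dc}$ (respectively $n_{db}=k\,n_{dc}$), and read off the conclusion. The only cosmetic difference is that the paper compares the image subalgebras $\sigma_{db}(\mathcal{T}_b)$ and $\sigma_{dc}(\mathcal{T}_c)$ inside $\mathcal{T}_d$ and then pushes forward by $\sigma_d^{K^a}$, whereas you track the single generators $t_b=t_d^{\,n_{db}}$ and $t_c=t_d^{\,n_{dc}}$ directly inside the limit; the content is identical.
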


\begin{proof} 1)  Assume that for some elements  $b,c \in K^a$
  equality  (\ref{eq:nbanca}) holds. Since  $K$ is a directed set there exists an element
   $d\in K^a$  such that  $b\leq d$  and  $c\leq d$.
Then factorization equalities (\ref{factor}) yield the following
equalities for the corresponding natural numbers:
\begin{equation}\label{ndb}
n_{db}\cdot n_{ba}=n_{da}=n_{dc}\cdot n_{ca}.
\end{equation}
Therefore, using condition   (\ref{eq:nbanca}), one gets immediately
the equality $ n_{db}=n_{dc},$
  which implies the following equality for the images of Toeplitz algebras:
\begin{equation}\label{eq:sigmadb}
\sigma_{db}(\mathcal{T}_b)=\sigma_{dc}(\mathcal{T}_c).
\end{equation}
 By  (\ref{sigma})  and  (\ref{eq:sigmadb}), we obtain  desired relation  (\ref{eq:sigmabK}):
$$
\sigma^{K^a}_b(\mathcal{T}_b)=\sigma^{K^a}_d(\sigma_{db}(\mathcal{T}_b))=
\sigma^{K^a}_d(\sigma_{dc}(\mathcal{T}_c))=\sigma_c^{K^a}(\mathcal{T}_c).
$$

2) \, Again, we choose an element    $d\in K^a$  such that the
conditions   $b\leq d$   and   $c\leq d$  are satisfied.
 Then, by  (\ref{ndb})  and  (\ref{eq:ncaknba}), we get the
equality
%\begin{equation}%\label{eq:sigmadb}
 $n_{db}\cdot n_{ba}=n_{dc}\cdot k\cdot n_{ba}.
 $
%\end{equation}
Consequently, we have the  equality  $n_{db}=n_{dc}\cdot k $
 that
together with  (\ref{sigmaba})  imply the following inclusion for
the images of Toeplitz algebras:
\begin{equation}\label{inclusionsigmadb}
\sigma_{db}(\mathcal{T}_b)\subset\sigma_{dc}(\mathcal{T}_c).
\end{equation}
Thus, using equality  (\ref{sigma})  and inclusion
 (\ref{inclusionsigmadb}), we obtain  required inclusion
 (\ref{inclusionsigmabK}):
$$
\sigma^{K^a}_b(\mathcal{T}_b)=\sigma^{K^a}_d(\sigma_{db}(\mathcal{T}_b))\subset
\sigma^{K^a}_d(\sigma_{dc}(\mathcal{T}_c))=\sigma_c^{K^a}(\mathcal{T}_c).$$
\end{proof}

\begin{lemma}\label{l3} There exists a totally ordered countable subset  $\Lambda^a$  in the set
 $K^a$   satisfying the following property. For every element  $b \in
K^a$  there is an element  $c\in \Lambda^a$  and a number  $k\in
\mathbb{N}$   such that the  equality
\begin{equation}\label{ncakdotnba}
n_{ca}=k\cdot n_{ba}
\end{equation}
holds, where   $n_{ca}\in N(\Lambda^a)$,   $n_{ba}\in N(K^a) $.
\end{lemma}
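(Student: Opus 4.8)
The plan is to exploit the fact that, although $K^a$ may well be uncountable, the associated numbers $n_{ba}$ range over a subset of $\mathbb{N}$ and are therefore at most countably many; a single increasing chain in $K^a$ will then suffice to dominate all of them in the divisibility order. First I would record the monotonicity built into the factorization equalities: whenever $c\leq d$ in $K^a$, equality (\ref{factor}) gives $n_{da}=n_{dc}\cdot n_{ca}$, so $n_{ca}$ divides $n_{da}$. Thus passing to a larger element of a chain can only enlarge, in the divisibility sense, the attached natural number, and this is precisely the relation (\ref{ncakdotnba}) we are trying to realize.

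Next I would set $P:=\{\,n_{ba}\mid b\in K^a\,\}\subseteq\mathbb{N}$, which is countable, fix an enumeration $P=\{p_1,p_2,\ldots\}$, and choose for each $n$ an element $b_n\in K^a$ with $n_{b_na}=p_n$. I would then note that $K^a$ is itself directed: if $a\leq b$ and $a\leq c$, any upper bound $d$ of $b$ and $c$ in the directed set $K$ satisfies $a\leq d$, hence $d\in K^a$. Using this, I would build inductively an increasing sequence $c_1\leq c_2\leq\ldots$ in $K^a$ with $b_n\leq c_n$ for every $n$: take $c_1:=b_1$ and, having chosen $c_{n-1}$, let $c_n$ be any common upper bound of $c_{n-1}$ and $b_n$ in $K^a$. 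Setting $\Lambda^a:=\{\,c_n\mid n\in\mathbb{N}\,\}$ gives a subset of $K^a$ that is countable and totally ordered by construction.

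It then remains to verify the covering property. Given an arbitrary $b\in K^a$, its number $n_{ba}$ equals some $p_n$, so $n_{ba}=n_{b_na}$. Since $b_n\leq c_n$, the factorization equality (\ref{factor}) yields
\[
n_{c_na}=n_{c_nb_n}\cdot n_{b_na}=n_{c_nb_n}\cdot n_{ba},
\]
which is exactly the desired equality (\ref{ncakdotnba}) with $c:=c_n\in\Lambda^a$ and $k:=n_{c_nb_n}\in\mathbb{N}$; moreover $n_{c_na}\in N(\Lambda^a)$ and $n_{ba}\in N(K^a)$ as required.

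The only genuine obstacle is the apparent size mismatch, namely that $K^a$ need not be countable, and the resolution is the observation that the data actually relevant to the statement are the natural numbers $n_{ba}$, of which there are only countably many; directedness then performs the routine task of threading the finitely-many representatives chosen at each stage into one chain. I would be mildly careful only about the degenerate case in which $P$ is finite, where the construction produces a finite chain $\Lambda^a$, which is still admissible as a countable totally ordered set.
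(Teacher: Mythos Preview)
Your proof is correct and follows essentially the same approach as the paper: enumerate the countable set of values $\{n_{ba}\mid b\in K^a\}$, choose representatives $b_n$, and use directedness of $K^a$ to thread them into a chain $c_1\leq c_2\leq\cdots$, then verify (\ref{ncakdotnba}) via the factorization equality. The only cosmetic differences are that the paper orders the values increasingly and starts the chain at $c_1=a$, whereas you use an arbitrary enumeration and start at $c_1=b_1$; neither affects the argument.
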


\begin{proof}
First of all, in the set of natural numbers   $N(K^a)$   we consider
the subset
\begin{equation}\label{set:N0Ka}
\left\{n_{b_sa}\in N \mid s\in \mathbb{N} \right\}
\end{equation}
which is uniquely determined  by the following three properties:
\begin{itemize}
\item $b_1=a$;
\item the inequality  $n_{b_sa}<n_{b_{s+1}a}$  is valid for every   $s\in \mathbb{N}$;
\item for every number  $ n_{ba}\in N(K^a)$  there exists a natural number  $n_{b_sa}$  in set
 (\ref{set:N0Ka})  such that   the~equality
    $
    n_{ba}=n_{b_sa}
    $
 holds.
\end{itemize}

In other words, we throw out repeating numbers from the set
$N(K^a)$. Moreover, the elements of set  (\ref{set:N0Ka}) constitute
an increasing sequence of natural numbers  indexed by  $s$. To
construct the desired set  $\Lambda^a$  we use set (\ref{set:N0Ka})
and proceed as follows.

As the first element of the set  $\Lambda^a$ denoted by  $c_1$  we
choose the element  $a$. We note that the condition  $b_1=a\leq
c_1=a$  is satisfied. Then   equality  (\ref{ncakdotnba})  holds
with  $ c=c_1=a$, $b=b_1=a  $   and  $ k=n_{c_1b_1}=n_{aa}=1. $

As the second element of the set  $\Lambda^a$  we take any element
$c_2\in K^a$  satisfying the conditions  $c_1\leq c_2$ and $b_2\leq
c_2$. Such an element exists because   $K^a$ is a directed set. In
this case we have  equality  (\ref{ncakdotnba})  with  $ c=c_2,$
$b=b_2$  and   $ k=n_{c_2b_2}. $ Continuing to argue in this way, we
shall construct  a countable totally ordered subset
\begin{equation}\label{Lambdaa}
\Lambda^a:=\{c_s \in K^a \mid s\in \mathbb{N} \}
\end{equation}
in the set
 $K^a$  possessing the required property.
It is worth noting that the condition  $c_s\leq c_{s+1}$  holds for
every  $s\in \mathbb{N}$.
\end{proof}

In the next lemma we consider the
inductive sequence of Toeplitz algebras
\begin{equation}\label{systemoverLambda}
\left(\Lambda^a,\{\mathcal{T}_{c_s}\},\{\sigma_{c_tc_s}\}\right)
\end{equation}
 over  the set  $\Lambda^a$  defined by the
subset  $\left \{n_{cb}\in N \mid  b,c\in \Lambda^a\right\}$  in the
set  $N$. The proof of the following statement is similar to that of
Proposition~1 in \cite{gumerovSMJ2018}.

\begin{lemma}\label{l4} There exists a subgroup  $\mathbb{Q}_M$
in the group \ $\mathbb{Q}$  such that the reduced semigroup
$C^*$-algebra of the semigroup  $\mathbb{Q}_M^+$  is isomorphic to
the inductive limit of the inductive sequence
(\ref{systemoverLambda}):
\begin{equation}\label{isoCredindlimsystemoverLambda}
C^*_{r}(\mathbb{Q}_M^+) \, \simeq \, \varinjlim\limits \left(\Lambda^a,\{\mathcal{T}_{c_s}\},\{\sigma_{c_tc_s}\}\right).
\end{equation}
\end{lemma}

\textbf{Remark.} %{directseqZ}
In \cite[Theorem~1]{murphy87} it is shown that the functor sending a
partially ordered group to the corresponding Toeplitz algebra is
continuous. Consider the inductive sequence of groups
(\ref{directseqZ}) with the connecting homomorphisms  $\tau_s$ given
by  $\tau_s(m)=n_{c_{s+1}c_s}m,$  $ m\in \mathbb{Z},$
$s\in\mathbb{N}$. Applying the above-mentioned functor to this
inductive sequence of groups and making use of Theorem~1 in
\cite{murphy87}, we obtain another proof of Lemma~\ref{l4}.

\section{The main result}\label{main}

In this section we prove the following statement concerning  the
inductive limits for inductive systems  of Toeplitz algebras  over
directed sets defined by sets of natural numbers (\ref{setN})
satisfying factorization equalities (\ref{factor}). To do this we
shall use the results from the previous section.

\begin{theorem}\label{indlim}
Let   $\mathcal{F}$  be an inductive system of Toeplitz algebras
over a directed set  $K$  defined  by a  set of natural numbers $N$
 satisfying factorization equalities.  Then there exists a subgroup
 $\mathbb{Q}_M$  in the group of all rational numbers such that
the reduced semigroup  $C^*$-algebra of the semigroup
$\mathbb{Q}_M^+$  is isomorphic to the inductive limit of the
inductive system  $\mathcal{F}$:
\begin{equation}\label{mainiso}
C^*_{r}(\mathbb{Q}_M^+) \, \simeq \, \varinjlim\limits \mathcal{F}.
\end{equation}
\end{theorem}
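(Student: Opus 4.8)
The plan is to fix an element $a\in K$ and to descend through the cofinality reductions supplied by Lemmas~\ref{l1}--\ref{l4}, landing on a reduced semigroup $C^*$-algebra. Since $K$ is directed it is nonempty, so such an $a$ exists. By Lemma~\ref{l1} the inductive limit over the cofinal subset $K^a$ is isomorphic to $\varinjlim\mathcal{F}$; hence it suffices to identify $\varinjlim(K^a,\{\mathcal{T}_b\},\{\sigma_{cb}\})$.

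The key step is to show that the countable totally ordered subset $\Lambda^a$ furnished by Lemma~\ref{l3} is already rich enough for the inductive limit over $\Lambda^a$ to coincide with the one over $K^a$. Write $\mathfrak{A}:=\varinjlim(K^a,\{\mathcal{T}_b\},\{\sigma_{cb}\})$ with its canonical embeddings $\sigma_b^{K^a}:\mathcal{T}_b\to\mathfrak{A}$. For an arbitrary $b\in K^a$, Lemma~\ref{l3} produces $c\in\Lambda^a$ and $k\in\mathbb{N}$ with $n_{ca}=k\cdot n_{ba}$, whence Lemma~\ref{l2}(2) yields the inclusion $\sigma_b^{K^a}(\mathcal{T}_b)\subset\sigma_c^{K^a}(\mathcal{T}_c)$. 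Consequently
$$
\bigcup_{b\in K^a}\sigma_b^{K^a}(\mathcal{T}_b)=\bigcup_{c\in\Lambda^a}\sigma_c^{K^a}(\mathcal{T}_c),
$$
the reverse inclusion being trivial because $\Lambda^a\subset K^a$. Taking norm closures and invoking (\ref{cup}), we obtain $\mathfrak{A}=\overline{\bigcup_{c\in\Lambda^a}\sigma_c^{K^a}(\mathcal{T}_c)}$.

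It remains to observe that the restricted family $\{\sigma_c^{K^a}\mid c\in\Lambda^a\}$ exhibits $\mathfrak{A}$ as the inductive limit of the inductive sequence (\ref{systemoverLambda}). Indeed, these maps are unital injective $*$-homomorphisms satisfying the compatibility relation (\ref{sigma}) with the connecting maps $\sigma_{c_tc_s}$, which are precisely the restrictions of the maps defining the system over $K^a$; and the density condition (\ref{cup}) was just verified. Thus, by the universal property recorded as property~2) of the inductive limit, $\mathfrak{A}\simeq\varinjlim(\Lambda^a,\{\mathcal{T}_{c_s}\},\{\sigma_{c_tc_s}\})$. Finally, Lemma~\ref{l4} furnishes a subgroup $\mathbb{Q}_M\subset\mathbb{Q}$ with $C^*_{r}(\mathbb{Q}_M^+)\simeq\varinjlim(\Lambda^a,\{\mathcal{T}_{c_s}\},\{\sigma_{c_tc_s}\})$. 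Composing the isomorphisms
$$
C^*_{r}(\mathbb{Q}_M^+)\simeq\varinjlim(\Lambda^a,\{\mathcal{T}_{c_s}\},\{\sigma_{c_tc_s}\})\simeq\varinjlim(K^a,\{\mathcal{T}_b\},\{\sigma_{cb}\})\simeq\varinjlim\mathcal{F}
$$
establishes (\ref{mainiso}). The main obstacle is the cofinality argument of the second paragraph: one must verify that discarding every index outside the countable chain $\Lambda^a$ loses nothing at the level of the limit, and this is exactly where the factorization equalities (\ref{factor}), funneled through Lemma~\ref{l2}(2) and Lemma~\ref{l3}, are indispensable.
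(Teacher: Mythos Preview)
Your proof is correct and follows essentially the same route as the paper's: fix $a\in K$, reduce to $K^a$ via Lemma~\ref{l1}, use Lemmas~\ref{l3} and~\ref{l2}(2) to show that the images over $\Lambda^a$ already exhaust those over $K^a$, invoke the universal property to identify the two limits, and finish with Lemma~\ref{l4}. The only cosmetic difference is that the paper builds an explicit injective $*$-homomorphism $\theta$ from the $\Lambda^a$-limit into the $K^a$-limit and checks surjectivity, whereas you directly verify that $(\mathfrak{A},\{\sigma_c^{K^a}\}_{c\in\Lambda^a})$ satisfies conditions (\ref{sigma}) and (\ref{cup}) for the sequence over $\Lambda^a$; these are two phrasings of the same argument.
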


\begin{proof}
  As in the previous section we use the notation   $\mathcal{F}=(K,\{\mathcal{T}_a\},\{\sigma_{ba}\})$.

 Let us fix an element  $a\in K$. Then we take a totally ordered countable subset
 $\Lambda^a$  in the set  $K^a$  which is constructed in the proof of Lemma~\ref{l3} (see (\ref{Lambdaa})).

We claim that there exists an isomorphism between the inductive limits of the inductive system
 $\mathcal{F}$  and  inductive sequence   (\ref{systemoverLambda})  of Toeplitz algebras, that is,
\begin{equation}\label{iso1}
\varinjlim\limits \mathcal{F} \, \simeq \,
 \varinjlim\limits\left(\Lambda^a,\{\mathcal{T}_{c_s}\},\{\sigma_{c_tc_s}\}\right)
\end{equation}
in the category of unital  $C^*$-algebras and  unital
$*$-homomorphisms.

Indeed, we consider the inductive system  (\ref{systemoverKa})  of
Toeplitz algebras over  the set  $K^a$  defined by the subset
(\ref{NsetforKa})  in the set  $N$. By  (\ref{cup})  we have the
following equality for the inductive limit of this system:
\begin{equation}\label{indlimsystemoverKa}
\varinjlim\limits\left(K^a,\{\mathcal{T}_b\},\{\sigma_{cb}\}\right) \, = \,
\overline{\bigcup\limits_{b\in
K^a}\sigma^{K^a}_b(\mathcal{T}_b)}.
\end{equation}

By Lemma~\ref{l1}, to show the existence of isomorphism (\ref{iso1})
it is enough to prove that one has the isomorphism between the
inductive limits of systems  (\ref{systemoverLambda})  and
(\ref{systemoverKa})  in the category of unital  $C^*$-algebras and
their unital  $*$-homomorphisms:
\begin{equation}\label{isoindlimLambdaaKa}
\varinjlim\limits\left(\Lambda^a,\{\mathcal{T}_{c_s}\},\{\sigma_{c_tc_s}\}\right)
\, \simeq \,
\varinjlim\limits\left(K^a,\{\mathcal{T}_b\},\{\sigma_{cb}\}\right).
\end{equation}
To construct an isomorphism  (\ref{isoindlimLambdaaKa}) we shall use
the universal property and property~2) of the inductive limit for
system   (\ref{systemoverLambda}) (see Preliminaries). To this end,
for elements $c_s,c_t\in \Lambda^a$   satisfying the condition
$c_s\leq c_t$  we consider the following commutative diagram
(firstly without the dashed arrow):
\begin{equation}\label{diagram:isoinlimLamdaaKa}
\vcenter{
\xymatrix{
 \mathcal{T}_{c_s}\ar[ddddrrr]_{\sigma^{K^a}_{c_s}}
 \ar[rrrrrr]^{\sigma_{c_tc_s}} \ar[rrrdd]^{\sigma^{\Lambda^a}_{c_s}}
   & & & & & & \mathcal{T}_{c_t}\ar[ddddlll]^{\sigma^{K^a}_{c_t}} \ar[llldd]_{\sigma^{\Lambda^a}_{c_t}} \\
\\
 & & & \varinjlim\limits\left(\Lambda^a,\{\mathcal{T}_{c_s}\},\{\sigma_{c_tc_s}\}\right)
 \ar@{-->}[dd]^{\theta}  &  & & & \\
 \\
& & & \varinjlim\limits\left(K^a,\{\mathcal{T}_b\},\{\sigma_{cb}\}\right) & & & }
}
\end{equation}
The universal property of the inductive limits for systems of
$C^*$-algebras  and the injectivity of  $*$-homomorphisms in diagram
(\ref{diagram:isoinlimLamdaaKa})  yield the injective
$*$-homomorphism
$$
\theta:\varinjlim\limits\left(\Lambda^a,\{\mathcal{T}_{c_s}\},\{\sigma_{c_tc_s}\}\right) \longrightarrow
\varinjlim\limits\left(K^a,\{\mathcal{T}_b\},\{\sigma_{cb}\}\right)
$$
such that diagram (\ref{diagram:isoinlimLamdaaKa})  complemented by
 $\theta$  is  also commutative.
To show that the homomorphism  $\theta$  is surjective it is
sufficient  (see property~2)  in Preliminaries)  to prove the
equality
\begin{equation}\label{indlimsystemoverKathroughLamdaa}
\varinjlim\limits\left(K^a,\{\mathcal{T}_b\},\{\sigma_{cb}\}\right) \, = \,
\overline{\bigcup\limits_{s=1}^{+\infty}\sigma^{K^a}_{c_s}(\mathcal{T}_{c_s})}.
\end{equation}
Since we have the inclusion of sets  $\Lambda^a\subset K^a$ and
representation  (\ref{indlimsystemoverKa}), the space on the
left-hand side of  (\ref{indlimsystemoverKathroughLamdaa}) contains
the space on its right-hand side.

To prove the reverse inclusion for the spaces in
(\ref{indlimsystemoverKathroughLamdaa})  we use
(\ref{indlimsystemoverKa})  again. For this aim let us fix an
arbitrary element  $b\in K^a$. We state that the image set
$\sigma^{K^a}_b(\mathcal{T}_b)$ is contained in the set
$\sigma^{K^a}_{c_s}(\mathcal{T}_{c_s})$  for an element  $c_s\in
\Lambda^a$.

Really, by Lemma~\ref{l3}, there exists an element  $c_s\in
\Lambda^a$  and a number  $k\in \mathbb{N}$  such that the
factorization equality  (\ref{ncakdotnba})  is valid for natural
numbers  $n_{c_sa}\in N(\Lambda^a)$,  $n_{ba}\in N(K^a) $. By
assertion~{\rm 2)} in Lemma~\ref{l2}, for the image sets
$\sigma^{K^a}_b(\mathcal{T}_b)$  and
$\sigma^{K^a}_{c_s}(\mathcal{T}_{c_s})$  we obtain  inclusion
(\ref{inclusionsigmabK}) with  $c_s$  instead of  $c$.
 It follows from  (\ref{indlimsystemoverKa})  that
the inclusion for  $C^*$-algebras
\begin{equation*}\label{incl}
\varinjlim\limits\left(K^a,\{\mathcal{T}_b\},\{\sigma_{cb}\}\right) \, \subset \,
\overline{\bigcup\limits_{s=1}^{+\infty}\sigma^{K^a}_{c_s}(\mathcal{T}_{c_s})}
\end{equation*}
holds. Hence, equality  (\ref{indlimsystemoverKathroughLamdaa})  is
proved. Therefore, the $*$-homomorphism  $\theta$  between the
inductive limits is an isomorphism  of $C^*$-algebras. Thus,
isomorphism (\ref{isoindlimLambdaaKa})  exists.

Furthermore, we obtain  the existence of isomorphism  (\ref{iso1}),
as claimed. Finally, by Lemma~\ref{l4}, there exists a subgroup
$\mathbb{Q}_M$
 in the group  $\mathbb{Q}$  for which we have isomorphism
(\ref{isoCredindlimsystemoverLambda}). Thus, using isomorphism
(\ref{iso1}), we obtain isomorphism  (\ref{mainiso}), as required.
\end{proof}

\section{Inductive systems of Toeplitz algebras over arbitrary partially ordered sets}\label{last}

Throughout this section  a pair   $(\,K, \, \leq \,)$  denotes a
partially ordered set that is not necessarily directed. By analogy
with Definition~\ref{D:indsystemToeplitzalg}, one can define the
inductive system  of Toeplitz algebras over   $(\,K, \, \leq \,)$
defined by  set  (\ref{setN})  satisfying factorization equalities
(\ref{factor}). Below we shall consider such an inductive system
$\mathcal{F}$  and use notation (\ref{mathcalFindsystemoverK}).

Taking the family of all  directed subsets of the set $(\,K, \, \leq
\,)$  and making use of   Zorn's lemma, one can easily prove the
following statement.

\begin{lemma}\label{Krepresentation}
Let \ $(\,K, \, \leq \,)$ \ be a partially ordered set. Then the
following equality holds:
\begin{equation}\label{KbigcupKi}
K=\bigcup\limits_{i\in I}K_i,
\end{equation}
where \ $\left \{ \, K_i \, \mid \, i\in I \, \right \}$ \ is the
family of all maximal  directed subsets of the set  $(\, K, \, \leq
\, )$.
\end{lemma}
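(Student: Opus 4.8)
The plan is to prove the nontrivial inclusion $K\subseteq\bigcup_{i\in I}K_i$, since the reverse inclusion is immediate from $K_i\subseteq K$ for all $i\in I$. Thus it suffices to show that \emph{every} element of $K$ belongs to some maximal directed subset, and the whole argument is a direct application of Zorn's lemma, exactly as the statement anticipates. So first I would fix an arbitrary element $x\in K$ and aim to produce a maximal directed subset of $K$ containing $x$.

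To this end I would introduce the auxiliary collection
$$
\mathcal{D}_x:=\{\,D\subseteq K \mid x\in D \text{ and } D \text{ is directed}\,\},
$$
partially ordered by inclusion. This collection is nonempty, because the singleton $\{x\}$ is trivially an upward directed subset of $K$ and hence lies in $\mathcal{D}_x$.

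Next I would verify the hypothesis of Zorn's lemma for $\mathcal{D}_x$. Let $\mathcal{C}$ be a nonempty chain in $\mathcal{D}_x$ and set $D^*:=\bigcup_{D\in\mathcal{C}}D$. Clearly $x\in D^*$. To check that $D^*$ is directed, take $\alpha,\beta\in D^*$; then $\alpha\in D_1$ and $\beta\in D_2$ for some $D_1,D_2\in\mathcal{C}$, and since $\mathcal{C}$ is totally ordered by inclusion we may assume $D_1\subseteq D_2$, so that $\alpha,\beta\in D_2$. As $D_2$ is directed, there exists $\gamma\in D_2\subseteq D^*$ with $\alpha\leq\gamma$ and $\beta\leq\gamma$. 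Hence $D^*\in\mathcal{D}_x$ is an upper bound for $\mathcal{C}$, and by Zorn's lemma $\mathcal{D}_x$ possesses a maximal element $M$.

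The last step, which is the only point demanding a little attention, is to upgrade the maximality of $M$ within the auxiliary family $\mathcal{D}_x$ to genuine maximality among \emph{all} directed subsets of $K$. Indeed, if $D$ is any directed subset with $M\subseteq D$, then $x\in M\subseteq D$ forces $D\in\mathcal{D}_x$, whence the maximality of $M$ in $\mathcal{D}_x$ yields $M=D$. Therefore $M$ is a maximal directed subset of $K$, say $M=K_i$ for some $i\in I$, and $x\in K_i$. Since $x\in K$ was arbitrary, this establishes $K\subseteq\bigcup_{i\in I}K_i$ and hence equality (\ref{KbigcupKi}).
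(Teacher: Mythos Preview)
Your proof is correct and follows exactly the route the paper indicates: the paper merely says to apply Zorn's lemma to the family of directed subsets of $(K,\leq)$, and you have supplied the standard details (the singleton $\{x\}$ gives nonemptiness, unions of chains of directed sets are directed, and maximality in $\mathcal{D}_x$ implies maximality among all directed subsets). There is nothing to add.
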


Now, for a given inductive system of Toeplitz algebras
(\ref{mathcalFindsystemoverK})  over  the partially ordered set
$(\,K, \, \leq \,)$  defined by set  (\ref{setN})  satisfying
(\ref{factor})  we consider representation  (\ref{KbigcupKi}).
 Then for each  index  $i\in I$  we can construct the inductive system of Toeplitz algebras
\begin{equation}\label{subsystem}
\mathcal{F}_i=(K_i,\{\mathfrak{A}_a\},\{\sigma_{ba}\})
\end{equation}
 over the
 directed set   $\left(\,K_i, \, \leq \,\right)$  defined by the set of natural numbers
$ \left \{\, n_{ba}\in N \, \mid \, a,b \in K_i \, \right \} $ \ and
its inductive limit  $\varinjlim\mathcal{F}_i$.

We consider the direct product of  $C^*$-algebras
$\varinjlim\mathcal{F}_i$. That is, the  $C^*$-algebra
$$
\prod\limits_{i\in I}\varinjlim\mathcal{F}_i:= \left\{ (f_i) \,
\big| \, \|(f_i)\|=\sup_i\|f_i\|< + \infty \right\}
$$
relative to the pointwise operations and the
supremum norm.

To formulate  the result of this section it is convenient for us to give the definition.
\begin{definition}
The inductive system  $\mathcal{F}_i$  defined by (\ref{subsystem})
is called the inductive subsystem of  $\mathcal{F}$ over the set
$K_i$.
\end{definition}

The following statement is an immediate consequence of Theorem~\ref{indlim}.

\begin{theorem}
Let  $K$  be a partially ordered set and let  $\left\{K_i \subset K
\mid i\in I \right\} $  be a family of all maximal directed subsets
in the set  $K$. Let   $\mathcal{F}$  be an inductive system of
Toeplitz algebras  over   $K$  defined  by a  set of natural numbers
 $N$  satisfying factorization equalities. Let   $\mathcal{F}_i$,
where  $i\in I$,   denote the inductive subsystem of  $\mathcal{F}$
  over  the set  $K_i$. Then there exists a family  $ \left
\{\mathbb{Q}_{M_i}\subset \mathbb{Q} \mid i\in I \right \} $
consisting of subgroups in the group of all rational numbers  $
\mathbb{Q}$  and an isomorphism between the direct products of
$C^*$-algebras
$$
\prod\limits_{i\in
I}\varinjlim\mathcal{F}_i\simeq \prod\limits_{i\in
I}C^*_{r}(\mathbb{Q}_{M_i}^+ )
$$
in the category of unital  $C^*$-algebras and their unital
$*$-homomorphisms.
\end{theorem}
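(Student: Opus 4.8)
The plan is to reduce the statement to $\abs{I}$ independent applications of Theorem~\ref{indlim} and then to bundle the resulting isomorphisms into a single isomorphism of the direct products. The essential observation is that the index set $I$ in the conclusion is the same $I$ that indexes the family $\{K_i \mid i\in I\}$ of maximal directed subsets, so nothing has to be matched up: the product on the left and the product on the right are taken over one and the same index set, and it suffices to treat each factor separately.

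First I would check that each subsystem $\mathcal{F}_i$ satisfies the hypotheses of Theorem~\ref{indlim}. By construction $K_i$ is a maximal directed subset of $K$, so $(K_i,\,\leq)$ is a directed set, and $\mathcal{F}_i=(K_i,\{\mathcal{T}_a\},\{\sigma_{ba}\})$ is an inductive system of Toeplitz algebras over this directed set defined by the set of natural numbers $N_i:=\{\,n_{ba}\in N \mid a,b\in K_i\,\}$. Since the factorization equalities (\ref{factor}) hold for every comparable triple in $K$, they hold in particular for every comparable triple in $K_i$; hence $N_i$ satisfies (\ref{factor}), and $\mathcal{F}_i$ is exactly an inductive system to which Theorem~\ref{indlim} applies. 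Applying that theorem to each $i\in I$ yields a subgroup $\mathbb{Q}_{M_i}\subset\mathbb{Q}$ together with an isomorphism of unital $C^*$-algebras
$$
\theta_i:\varinjlim\mathcal{F}_i\longrightarrow C^*_{r}(\mathbb{Q}_{M_i}^+),
$$
and the collection $\{\mathbb{Q}_{M_i}\mid i\in I\}$ is the required family of subgroups of $\mathbb{Q}$.

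It remains to assemble the $\theta_i$ into the product map
$$
\Theta:\prod_{i\in I}\varinjlim\mathcal{F}_i\longrightarrow\prod_{i\in I}C^*_{r}(\mathbb{Q}_{M_i}^+),\qquad \Theta\bigl((f_i)\bigr):=\bigl(\theta_i(f_i)\bigr),
$$
and to verify that $\Theta$ is a unital $*$-isomorphism. Because each $\theta_i$ is a $*$-isomorphism of $C^*$-algebras it is isometric, so $\|(\theta_i(f_i))\|=\sup_i\|\theta_i(f_i)\|=\sup_i\|f_i\|=\|(f_i)\|$; in particular the image of a bounded family is again bounded, so $\Theta$ is well defined and isometric. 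Since the algebraic operations on both products are pointwise, $\Theta$ is a unital $*$-homomorphism carrying the unit to the unit, and it is bijective because each $\theta_i$ is. Hence $\Theta$ is the desired isomorphism in the category of unital $C^*$-algebras and their unital $*$-homomorphisms.

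I do not expect a substantive obstacle here, which is consistent with the statement being labelled an immediate consequence of Theorem~\ref{indlim}: all of the analytic content is already packaged in that theorem. The only points meriting explicit care are (i) that each maximal directed subset $K_i$ genuinely inherits directedness and that $N_i$ inherits the factorization property (\ref{factor}), so that Theorem~\ref{indlim} really applies factorwise, and (ii) that a family of isometric $*$-isomorphisms induces an isometric $*$-isomorphism of the corresponding product $C^*$-algebras under the supremum norm; both are routine once the isometry of each $\theta_i$ is invoked.
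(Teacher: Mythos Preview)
Your proposal is correct and matches the paper's approach: the paper simply declares the theorem an immediate consequence of Theorem~\ref{indlim}, and your argument spells out exactly that reduction --- apply Theorem~\ref{indlim} to each directed subsystem $\mathcal{F}_i$ to obtain the groups $\mathbb{Q}_{M_i}$ and isomorphisms $\theta_i$, then assemble them into the product isomorphism $\Theta$.
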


\centerline{\textbf{Acknowledgments}}
The author is grateful to S.~A.~Grigoryan and E.~V.~Lipacheva for helpful discussions of the results.

The research was funded by the subsidy allocated to Kazan Federal University
for the state assignment in the sphere of scientific activities, project ¹ 1.13556.2019/13.1.  %%%

\bibliographystyle{amsplain}

\end{document}